\newtheorem{theorem}{Theorem}[section]
\newtheorem{proposition}[theorem]{Proposition}
\newtheorem{lemma}[theorem]{Lemma}
\newtheorem{corollary}[theorem]{Corollary}
\theoremstyle{definition}
\newtheorem{definition}[theorem]{Definition}
\theoremstyle{remark}
\numberwithin{equation}{section}
\providecommand{\bysame}{\leavevmode\hbox to3em{\hrulefill}\thinspace}
\def\DJo{$\;$\kern-.4em
    \hbox{D\kern-.8em\raise.15ex\hbox{--}\kern.35em okovi\'{c}}}
\def\al{{\alpha}}
\def\ve{{\varepsilon}}
\def\vf{{\varphi}}
\def\sig{{\sigma}}
\def\bZ{{\mbox{$\mathbb Z$}}}
\def\pE{{\mathcal E}}
\def\Zp{{\makebox[.75em]{+}}}
\def\Zm{{\makebox[.75em]{-}}}
\renewcommand{\subjclassname}{\textup{2000} Mathematics Subject
Classification }
\begin{document}

\title[Turyn-type sequences]
{Turyn-type sequences: Classification, Enumeration and Construction}

\author[D. Best]
{D.  Best}

\author[D.\v{Z}. \DJo]
{D.\v{Z}. \DJo}


\author[H. Kharaghani]
{H. Kharaghani}

\author[H. Ramp]
{H. Ramp}

\address{Department of Pure Mathematics and Institute for Quantum Computing, University of Waterloo,
Waterloo, Ontario, N2L 3G1, Canada}

\email{djokovic@uwaterloo.ca}

\address{Department of Mathematics and Computer Science, University of Lethbridge,
Lethbridge, Alberta, T1K 3M4, Canada}

\email{darcy.best@uleth.ca,hadi@cs.uleth.ca,hugh.ramp@uleth.ca}


\keywords{Turyn-type sequences,
nonperiodic autocorrelation functions, canonical form}

\date{}

\begin{abstract}
Turyn-type sequences, $TT(n)$, are quadruples of $\{\pm1\}$-sequences $(A;B;C;D)$, with lengths $n,n,n,n-1$ respectively, where the sum of the nonperiodic autocorrelation functions of $A,B$ and twice that of $C,D$ is a $\delta$-function (i.e., vanishes everywhere except at 0).   
Turyn-type sequences $TT(n)$ are known to exist for all even $n$ not larger than 36.
We introduce a definition of equivalence to construct a canonical
form for $TT(n)$ in general. By using this canonical form,
we enumerate the equivalence classes of $TT(n)$ for $n \le 32$.
We also construct the first example of Turyn-type sequences $TT(38)$. \end{abstract}

\maketitle
\subjclassname{ 05B20, 05B30 }
\vskip5mm

\section{Introduction} \label{Uvod}
Let a {\em binary} {\em sequence} be a sequence $A=a_1,...,a_m$ whose terms belong to $\{\pm 1\}$.
To such a sequence, we associate the polynomial
$A(x)=a_1+a_2x+\cdots+a_mx^{m-1}$, and refer to the Laurent polynomial $N(A)=A(x)A(x^{-1})$ as
the {\em norm} of $A$.
Denoted $TT(n)$, a {\em Turyn-type sequence} $(A;B;C;D)$ is a quadruple of binary sequences with $A,B$ and $C$ of length $n$ and $D$ of length $n-1$, such that
\begin{equation} \label{norm-TT}
N(A)+N(B)+2N(C)+2N(D)=6n-2.
\end{equation}

Turyn-type sequences should not be confused with the so called 
``Turyn sequences'' \cite[Definition 5.1, p. 478]{SY}, which are 
also quadruples of $\{\pm1\}$-sequences of lengths 
$n,n,n-1,n-1$. In addition to the requirement that the sum of 
their non-periodic autocorrelation functions is a $\delta$-function, 
they also have certain desirable symmetry properties. 
Unfortunately, there are only a few known Turyn sequences, 
all with $n\le14$.

Turyn-type sequences play an important role in the construction of Hadamard matrices \cite{HCD,SY}. For instance, the
discovery of a Hadamard matrix of order 428 \cite{KT} used a $TT(36)$,
constructed specifically for that purpose. 
From $TT(n)$, one can construct (as explained in Section 5) base sequences of lengths 
$2n-1$, $2n-1$, $n$, $n$. 
If base sequences of lengths $m$, $m$, $n$, $n$ are known, 
one can use the Goethals-Seidel array to construct a Hadamard 
matrix of order $4(m+n)$. We refer the reader to \cite[p. 436]{KT} for 
details.

Furthermore, two of the three remaining orders less than 1000 for which the existence of a Hadamard matrix is not known may be resolved by using Turyn-type sequences of appropriate lengths (assuming that 
they exist).  
More precisely, Turyn-type sequences $TT(56)$ and $TT(60)$ may be used to construct Hadamard matrices of orders 668 and 716 respectively.

The discovery of any new Turyn-type sequences leads to an infinite class of Hadamard matrices, as explained in  \cite[p. 439]{KT}. Despite the importance of Turyn-type sequences, not much is known about their existence. All the existing results related to these sequences rely on increasingly lengthy computer calculations. In order to have a better understanding of  the structure of Turyn-type sequences, it is essential to classify them for as many values of  $n$ as possible.
Our main goal is to provide a classification of $TT(n)$ for even $n\le32$ ($TT(n)$ do not exist for odd $n>1$) and to modify an existing search method to construct a $TT(38)$. The new $TT(38)$ can be used to construct an infinite class of Hadamard matrices; see \cite[p. 439]{KT}.

In Section \ref{TT-Niz}, we define the standard elementary
transformations of $TT(n)$ and use them to introduce an
equivalence relation. We also introduce a canonical form for
Turyn-type sequences. Using this, we are able to compute the
representatives of the equivalence classes.

An abstract group of order $2^{10}$ is introduced in Section
\ref{Grupa}, which acts naturally on all sets of $TT(n)$.
The orbits of this group are the equivalence classes of $\{TT(n)\}$.

In Section \ref{Tablice}, a list of representatives of the equivalence classes of $\{TT(n)\}$ (those for even $n\le 32$) are tabulated. 
Due to their excessive length, the tables for $n>10$ are truncated
to 12 members only.

Finally, in Section \ref{tt(38)}, the search method for finding a  $TT(38)$ is explained.

\section{A canonical form for turyn-type sequences} \label{TT-Niz}

We denote finite sequences of integers by capital letters.
If $A$ is such a sequence of length $n$, then we denote
its elements by the corresponding lower case letters. Thus,
$$ A=a_1,a_2,\ldots,a_n. $$
The {\em nonperiodic autocorrelation function} of $A$, $N_A$, is
defined by:
$$ N_A(i)=\sum_{j\in\bZ} a_ja_{i+j},\quad i\in\bZ, $$
where $a_k=0$ for $k<1$ and $k>n$. 
 (As usual, $\bZ$ denotes the ring of integers.)
Note that $N_A(-i)=N_A(i)$ for all $i\in\bZ$ and $N_A(i)=0$ 
for $i\ge n$. 
The integers $N_A(i)$ are the coefficients of the norm of $A$, 
i.e., we have
$$ N(A)=\sum_{i\in\bZ} N_A(i) x^i . $$

Assume that $(A;B;C;D)$ is a $TT(n)$. 
From equation (\ref{norm-TT}), we have
\begin{equation} \label{KorNula}
N_A(i)+N_B(i)+2N_C(i)+2N_D(i)=0, \quad i\ne0.
\end{equation}
The {\em negated} sequence, $-A$, the {\em reversed} sequence, $A'$, and the {\em alternated} sequence,
$A^*$, of the sequence $A$ are defined by
\begin{eqnarray*}
-A &=& -a_1,-a_2,\ldots,-a_n, \\
A' &=& a_n,a_{n-1},\ldots,a_1, \\
A^* &=& a_1,-a_2,a_3,-a_4,\ldots,(-1)^{n-1}a_n
\end{eqnarray*}
respectively. Observe that $N(-A)=N(A')=N(A)$ and $N_{A^*}(i)=(-1)^i N_A(i)$
for all $i\in\bZ$.

We define four types of elementary transformations
of Turyn-type sequences. 	

The {\em elementary transformations} of $(A;B;C;D)\in \{TT(n)\}$
are the following:

(T1) Negate one of $A,B,C $ or $ D$.

(T2) Reverse one of $A, B, C$ or $D$.

(T3) Alternate all four sequences $A, B, C$ and $D$.

(T4) Interchange the sequences $A$ and $B$.


We say that two $TT(n)$ are {\em equivalent} if
one can be transformed to the other by applying a finite
sequence of elementary transformations.
One can enumerate the equivalence classes by finding suitable
representatives of the classes.
For that purpose, we introduce a canonical form.

\begin{definition} \label{KanFor}
We say that $S=(A;B;C;D) \in \{TT(n)\}$ is in 
{\em canonical form} if the following six conditions hold:
\begin{enumerate}[(i)]
 \item $a_1=a_n=b_1=b_n=c_1=d_1=+1$;
 \item If $i$ is the least index such that $a_i\ne a_{n+1-i}$, then $a_i=+1$;
\item If $i$ is the least index such that $b_i\ne b_{n+1-i}$, then $b_i=+1$;	
\item	If $i$ is the least index such that $c_i=c_{n+1-i}$,
then $c_i=+1$;
\item	  If $i$ is the least index such that
$d_i d_{n-i}\ne d_{n-1}$, then $d_i=+1$;
\item Assume that $n>2$. If $a_2\ne b_2$ then $a_2=+1$; if $a_2 = b_2$ then 
$a_{n-1}=+1$ and $b_{n-1}=-1$.
\end{enumerate}
\end{definition}

Note that if $n>1$, then (i) and (\ref{KorNula}) imply 
that $c_n=-1$.
We can now prove that each equivalence class has a member
which is in the canonical form. The uniqueness of this member
will be proved in the next section.

\begin{proposition} \label{Klase}
Each equivalence class $\pE\subseteq \{TT(n)\}$ has at least
one member having the canonical form.
\end{proposition}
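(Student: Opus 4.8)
The plan is to start from an arbitrary $S = (A;B;C;D)$ in the equivalence class $\pE$ and impose the six conditions of Definition~\ref{KanFor} one at a time, checking at each stage that the transformations used do not disturb conditions already secured. The key observation throughout is that the four families of elementary transformations act on the four sequences almost independently: (T1) and (T2) act on a single chosen sequence, (T4) swaps only $A$ and $B$, and only (T3) touches all four at once. So the strategy is to arrange the argument so that (T3) is applied (if at all) early, before the conditions it would break have been established.

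First I would handle condition (i). By the remark following the definition of the elementary transformations, $N(-A)=N(A')=N(A)$, so (T1) and (T2) applied to any sequence preserve membership in $\{TT(n)\}$; likewise (T3) preserves it since $N_{A^*}(i) = (-1)^i N_A(i)$ and \eqref{norm-TT} is symmetric in sign-twisting all four. For $C$ and $D$: a single use of (T1) on $C$ forces $c_1 = +1$, and a single use of (T1) on $D$ forces $d_1 = +1$. For $A$: if $a_1 = -1$ negate $A$ via (T1); this makes $a_1 = +1$, and then if $a_n = -1$ we cannot fix it with (T1) again (that would undo $a_1$), so instead we note that reversing $A$ via (T2) swaps $a_1$ and $a_n$ — so the correct order is: first reverse if necessary to move a $+1$ into position~1 or use the alternation. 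Here is the cleaner route: among the four sign patterns $(a_1,a_n) \in \{(+,+),(+,-),(-,+),(-,-)\}$, negation by (T1) toggles both signs simultaneously and reversal by (T2) swaps them, so the group generated by these two operations acts on $\{(a_1,a_n)\}$ with orbits $\{(+,+),(-,-)\}$ and $\{(+,-),(-,+)\}$; thus (T1)+(T2) can always achieve $a_1 = a_n = +1$ \emph{unless} $a_1 a_n = -1$, in which case we must first apply (T3), which multiplies $a_n$ by $(-1)^{n-1}$. Since $n$ is even, (T3) flips $a_n$ (as $n-1$ is odd) while leaving $a_1$ fixed, so a single (T3) moves us into the good orbit. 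Apply (T3) first if and only if $a_1 a_n b_1 b_n = -1$ cannot otherwise be reconciled — more precisely, one checks that for even $n$, after at most one (T3) one can independently fix $(a_1,a_n)$ and $(b_1,b_n)$ to $(+,+)$ using (T1),(T2) on $A$ and on $B$ separately, and then fix $c_1, d_1$ with (T1) on $C, D$. The case $n$ odd with $n>1$ does not arise since $TT(n)$ do not exist then, and $n=1$ is trivial. This disposes of (i), and (T3) is now spent.

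Next, with (T3) no longer available, I would secure conditions (ii)–(v) using only the \emph{reversal} (T2) on each sequence individually — note reversal does not change $a_1$ or $a_n$ (it swaps them, but they are now equal to $+1$), so (i) survives. For (ii): if $A$ is not palindromic, let $i$ be least with $a_i \ne a_{n+1-i}$; reversing $A$ interchanges $a_i$ and $a_{n+1-i}$, and one checks this does not change the \emph{index} $i$ (the first $i-1$ coordinates are symmetric and stay so), so by reversing $A$ if necessary we get $a_i = +1$; if $A$ is palindromic, (ii) is vacuous. Conditions (iii), (iv), (v) are identical in spirit for $B$, $C$, $D$ respectively, using (T2) on that sequence — for (v) one needs the small computation that reversing $D$ sends the quantity $d_i d_{n-i}$ and the reference value $d_{n-1}$ to a configuration where the least bad index is preserved and its sign can be chosen $+1$. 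Each of these four uses of (T2) touches a different sequence, so they are independent and none interferes with (i) or with each other. The possible subtlety, and the one place I expect to have to be careful, is condition (v): verifying that the ``least index $i$ with $d_i d_{n-i} \ne d_{n-1}$'' is genuinely invariant under reversal of $D$ requires a short but not entirely mechanical index-chase, and one must confirm the condition is well-posed (that such $i$, if the property fails somewhere, can always be normalized).

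Finally I would handle condition (vi), which is the only one coupling $A$ and $B$, using the remaining transformation (T4) (swap $A \leftrightarrow B$) together possibly with a reversal of $A$ and a reversal of $B$ — but here one must check compatibility with (ii) and (iii). If $a_2 \ne b_2$: swapping via (T4) toggles which of the two values sits in the $A$-slot, so we can force $a_2 = +1$; the swap also interchanges conditions (ii) and (iii), which are symmetric, so they are jointly preserved (we just relabel). If $a_2 = b_2$: we need $a_{n-1} = +1$ and $b_{n-1} = -1$. Using (T4) we may assume WLOG the pair $(a_{n-1}, b_{n-1})$ has a prescribed orientation among $\{(+,-),(-,+)\}$ or lies in $\{(+,+),(-,-)\}$; when it lies in the diagonal orbit, observe that since $a_1 = a_n = +1$ and $A$ is now in the shape forced by (ii), reversing $A$ fixes $a_2 \to a_{n-1}$ — wait, reversal sends $a_2$ to position $n-1$, so if $a_2 = b_2$ and we reverse both $A$ and $B$ we swap the constraints at positions $2$ and $n-1$; combined with (T4) this gives enough freedom to land in the required $(a_{n-1},b_{n-1}) = (+,-)$ case while keeping $a_2 = b_2$, $a_1=a_n=b_1=b_n=+1$, and (ii)–(iii). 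The bookkeeping that all of (i)–(v) survive the (vi)-step is the real content here; I expect this final coordination — showing the last adjustment does not unravel the palindrome-type conditions (ii)–(v) for $A$ and $B$ — to be the main obstacle, and it is handled by checking that the only transformations still in play (swap $A\leftrightarrow B$, reverse $A$, reverse $B$) act on the relevant sign-data through a small abelian 2-group whose orbit structure visibly contains a representative meeting (vi). Uniqueness of the resulting canonical representative is deferred to the next section, as stated.
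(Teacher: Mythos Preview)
Your overall strategy --- normalize via (T1)--(T3) first, then fix the palindrome-type conditions with reversals, then handle (vi) with (T4) --- is the same as the paper's. But there are two concrete gaps.

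\textbf{Conditions (iv) and (v) cannot be handled by reversal alone.} After step (i) you have $c_1=+1$, and the remark following Definition~\ref{KanFor} forces $c_n=-1$. So reversing $C$ via (T2) sends $c_1$ to $-1$ and destroys (i); your parenthetical ``reversal does not change $a_1$ or $a_n$ (it swaps them, but they are now equal)'' is correct for $A,B$ but fails for $C$. The paper instead replaces $C$ by $-C'$, i.e.\ uses (T1) and (T2) together, which preserves $c_1=+1$, $c_n=-1$ and flips the first index where $c_i=c_{n+1-i}$. The same issue arises for $D$ when $d_{n-1}=-1$: reversing $D$ then gives $d_1=-1$, again breaking (i), and one must use $-D'$ rather than $D'$ in that case. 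These are easy repairs, but your stated plan (``using only (T2)'') does not work as written.

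\textbf{Condition (vi) requires the autocorrelation identity, which you never invoke.} Your orbit argument for the case $a_2=b_2$ tries to reach $(a_{n-1},b_{n-1})=(+,-)$ using swaps and reversals of $A,B$. But the product $a_2 b_2 a_{n-1} b_{n-1}$ is invariant under the group generated by $(A\leftrightarrow B)$, $A\mapsto A'$, $B\mapsto B'$ (each generator permutes these four entries), so if $a_2=b_2$ and $a_{n-1}=b_{n-1}$ this product is $+1$ and you can never reach a configuration with $a_{n-1}\ne b_{n-1}$, where the product would be $-1$. Negation is unavailable (it would undo (i)), and you have already spent (T3). The paper closes this gap not with more group theory but with the defining equation: setting $i=n-2$ in \eqref{KorNula} and using $c_1=d_1=+1$, $c_n=-1$ gives $a_1a_{n-1}+a_2a_n+b_1b_{n-1}+b_2b_n = 2(c_2-d_{n-2})$, and with $a_1=a_n=b_1=b_n=+1$ this forces $a_2+a_{n-1}+b_2+b_{n-1}\equiv 2\pmod 4$, i.e.\ \emph{exactly one} of $a_2=b_2$ and $a_{n-1}=b_{n-1}$ holds. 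Hence the bad diagonal case never occurs, and a single application of (T4) suffices. This algebraic input is the missing idea in your treatment of (vi).
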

\begin{proof}
Let $S=(A;B;C;D)\in\pE$ be arbitrary. By applying the first three
types of elementary transformations, we can assume that (i) holds.
To satisfy the condition (ii), replace $A$ with $A'$ (if necessary),
and similarly, we can satisfy the condition (iii).
To satisfy the condition (iv), replace $C$ with $-C'$ (if necessary).

To satisfy (v), observe that if $i$ exists, then $D$ is not symmetric and  $1<i \le n/2$. If $d_{n-1}=+1$, it suffices to replace
$D$ with $D'$ (if necessary). Otherwise, we replace $D$ with $-D'$
(if necessary).

To satisfy (vi), observe that the condition (\ref{KorNula}) with 
$i=n-2$ implies that exactly one of the equalities $a_2=b_2$ and 
$a_{n-1}=b_{n-1}$ hold. Thus, it suffices to apply T4 (if necessary). 
Hence, $S$ is now in the canonical form.
\end{proof}

\section{A symmetry group of $\{TT(n)\}$} \label{Grupa}

We shall construct a group $G$ of order $2^{10}$ which acts 
naturally on all $\{TT(n)\}$. Our (redundant) generating set for
$G$ will consist of 10 involutions. Each of these generators
is an elementary transformation, and we use
this information to construct $G$, i.e., to impose the defining
relations. Let $S=(A;B;C;D)$ be an arbitrary member of
$\{TT(n)\}$.

To construct $G$, we start with an elementary abelian group $E$
of order $2^8$ with generators $\nu_i,\rho_i$, $i\in\{1,2,3,4\}$.
It acts on $\{TT(n)\}$ as follows:
\begin{eqnarray*}
&& \nu_1S=(-A;B;C;D),\quad \rho_1S=(A';B;C;D), \\
&& \nu_2S=(A;-B;C;D),\quad \rho_2S=(A;B';C;D), \\
&& \nu_3S=(A;B;-C;D),\quad \rho_3S=(A;B;C';D), \\
&& \nu_4S=(A;B;C;-D),\quad \rho_4S=(A;B;C;D').
\end{eqnarray*}
That is, $\nu_i$ negates the $i$th sequence of $S$ and $\rho_i$
reverses it.

Next we introduce the involutory generator $\sig$.
We declare that $\sig$ commutes with
$\nu_3,\nu_4,\rho_3,\rho_4$, and that
$$
\sig\nu_1=\nu_2\sig,\quad
\sig\rho_1=\rho_2\sig.
$$
The group $H=\langle E,\sig \rangle$ is
the direct product of the group
$H_1= \langle \nu_1,\rho_1,\sig \rangle$ of order 32
and $H_2=\langle \nu_3,\nu_4,\rho_3,\rho_4 \rangle$.
The action of $E$ on $\{TT(n)\}$ extends to $H$ by defining
$\sig S=(B;A;C;D)$.

Finally, we define $G$ as the semidirect product of
$H$ and the group of order 2 with generator $\al$.
By definition, $\al$ satisfies $\al\rho_i\al=\rho_i\nu_i$ and commutes with $\rho_4$, $\sig$ and each $\nu_i$, for $i=1,2,3$.
The action of $H$ on $\{TT(n)\}$ extends to $G$ by letting $\al$ act
as the elementary transformation (T3), i.e., we have
$$ \al S=(A^*;B^*;C^*;D^*). $$

We point out that the definition of $G$ is independent of $n$.

The following proposition follows immediately from the
construction of $G$ and the description of its action on $\{TT(n)\}$.

\begin{proposition} \label{Orbite}
The orbits of $G$ in  $\{TT(n)\}$ are the same as the
equivalence classes.
\end{proposition}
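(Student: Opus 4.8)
The plan is to show that the group $G$ defined in Section \ref{Grupa} acts on $\{TT(n)\}$ with orbits exactly the equivalence classes, which amounts to checking two inclusions: every elementary transformation (T1)--(T4) is realized by an element of $G$, and conversely every generator of $G$ acts as a composition of elementary transformations. The second direction is essentially immediate, since $\nu_i$, $\rho_i$ ($i=1,2,3,4$) act as single applications of (T1) and (T2), $\sig$ acts as (T4), and $\al$ acts as (T3); thus any orbit of $G$ is contained in an equivalence class. For the first direction one observes that (T1) for the $i$th sequence is $\nu_i$, (T2) for the $i$th sequence is $\rho_i$, (T3) is $\al$, and (T4) is $\sig$, so any equivalence class is contained in a single $G$-orbit. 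Combining the two inclusions gives the equality of orbits and equivalence classes.

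There is one point that genuinely needs care and is, I expect, the only real obstacle: one must verify that the action of $G$ on $\{TT(n)\}$ is \emph{well defined}, i.e. that the abstract defining relations imposed on the generators $\nu_i,\rho_i,\sig,\al$ are actually satisfied by the corresponding transformations of Turyn-type sequences. The phrase ``follows immediately from the construction of $G$ and the description of its action'' signals that this verification has been folded into the construction, but to make the proposition rigorous one should confirm, for instance, that $\al\rho_1\al$ and $\rho_1\nu_1$ induce the same map on $TT(n)$ — this follows from the identity $(A^*)' = (-1)^{n-1}(A')^*$ together with $N$-invariance, so that reversing after alternating equals alternating after both reversing and (up to a global sign absorbed by the structure) negating. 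Similarly one checks $\sig\nu_1 = \nu_2\sig$ and $\sig\rho_1=\rho_2\sig$ hold as maps because interchanging $A$ and $B$ commutes appropriately with negation and reversal of the first two slots, and that $\al$ commutes with $\sig$, with $\nu_1,\nu_2,\nu_3$, and with $\rho_4$. Since each relation defining $G$ is an identity between two words in transformations that are visibly equal as functions on quadruples of sequences, the action factors through $G$.

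Concretely, I would organize the proof as follows. First, record that each of the ten generators of $G$ is one of the elementary transformations (or, in the case of $\al$, exactly (T3), and in the case of $\sig$, exactly (T4)), so every $G$-orbit lies inside an equivalence class. Second, record that (T1), (T2), (T3), (T4) are respectively $\nu_i$, $\rho_i$, $\al$, $\sig$ (for the appropriate index $i$), so every equivalence class lies inside a $G$-orbit; hence the two partitions of $\{TT(n)\}$ coincide. Third — the substantive step — verify that the map from the free product of the generating involutions to the symmetric group on $\{TT(n)\}$ kills all the defining relations of $G$, by checking each relation as an equality of explicit maps; this uses only the elementary identities $N(-A)=N(A')=N(A)$, $N_{A^*}(i)=(-1)^iN_A(i)$, $(A')'=A$, $(A^*)^*=A$, and $(A^*)'=(-1)^{n-1}(A')^*$, together with the obvious commutations among transformations acting on disjoint slots. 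Once these relations are seen to hold, the $G$-action is well defined and the identification of orbits with equivalence classes is complete.
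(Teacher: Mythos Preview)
Your proposal is correct and matches the paper's approach: the paper offers no proof beyond declaring that the proposition ``follows immediately from the construction of $G$ and the description of its action on $\{TT(n)\}$,'' and what you have written is precisely the unpacking of that sentence---each generator of $G$ is an elementary transformation and every elementary transformation is a generator, with the well-definedness of the action reducing to the routine identities you list. Your additional care about verifying the defining relations (e.g.\ $(A^*)'=(-1)^{n-1}(A')^*$) is more than the paper provides but is exactly what the phrase ``the construction of $G$'' is meant to encapsulate.
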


We shall need the following lemma.
\begin{lemma} \label{Lema}
For $S=(A;B;C;D)\in  \{TT(n)\}$, set $\vf(S)=a_1 a_n$. Then we
have $\vf(\al S)=-\vf(S)$ and $\vf(hS)=\vf(S)$ for all $h\in H$.
\end{lemma}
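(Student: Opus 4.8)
The plan is to verify the claim $\vf(\al S)=-\vf(S)$ and $\vf(hS)=\vf(S)$ directly from the explicit action of the generators, using the fact that $\vf$ is defined purely in terms of the first sequence $A$, so only transformations that touch $A$ can possibly affect $\vf$. First I would dispose of $\al$: since $\al S=(A^*;B^*;C^*;D^*)$ and $A^*=a_1,-a_2,\ldots,(-1)^{n-1}a_n$, the first coordinate of $A^*$ is $a_1$ and the last is $(-1)^{n-1}a_n$. Hence $\vf(\al S)=a_1\cdot(-1)^{n-1}a_n=(-1)^{n-1}\vf(S)$. This equals $-\vf(S)$ precisely when $n$ is even — which is the only case of interest since $TT(n)$ do not exist for odd $n>1$, and for $n=1$ the statement about $\al$ is vacuous as $a_1a_1=1$ is unchanged but also $(-1)^0=1$; one should note this parity point, perhaps citing the standing assumption that $n$ is even, or alternatively observe that $\al$ as an elementary transformation (T3) is only meaningful in that setting.

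Next I would check $\vf(hS)=\vf(S)$ for $h\in H$. Since $H=\langle \nu_1,\rho_1,\sig,\nu_3,\nu_4,\rho_3,\rho_4\rangle$, it suffices to check each generator. The generators $\nu_3,\nu_4,\rho_3,\rho_4$ act only on $C$ and $D$, leaving $A$ untouched, so $\vf$ is visibly preserved. For $\nu_1$: $\nu_1 S=(-A;B;C;D)$, and $(-a_1)(-a_n)=a_1a_n$, so $\vf(\nu_1 S)=\vf(S)$. For $\rho_1$: $\rho_1 S=(A';B;C;D)$ and $A'=a_n,\ldots,a_1$, so its first and last entries are $a_n$ and $a_1$; thus $\vf(\rho_1 S)=a_na_1=\vf(S)$. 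For $\sig$: $\sig S=(B;A;C;D)$, so $\vf(\sig S)=b_1b_n$, which need not literally equal $a_1a_n$ — but the key observation is that $\vf$ is not a numerical function of $S$ alone in the naive sense; rather one shows $\vf$ is constant on $H$-orbits, which requires a little more care for $\sig$.

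The main obstacle, then, is $\sig$: it swaps $A$ and $B$, so $\vf(\sig S)=b_1b_n$ rather than $a_1a_n$, and these are generally different binary values. The resolution is that Lemma~\ref{Lema} should be read as asserting $\vf(hS)=\vf(S)$ as an identity \emph{given} the structure of $H$ as $H_1\times H_2$ with $H_1=\langle\nu_1,\rho_1,\sig\rangle$; but $\sig$ genuinely changes $\vf$ unless one reinterprets. I expect the intended argument is that one proves the weaker-looking but correct statement by noting that every element of $H$ either fixes $A$ setwise (up to negation/reversal, which preserve $a_1a_n$) or swaps $A$ with $B$; and since we are only claiming invariance of $\vf$ along $H$-orbits starting from a \emph{fixed} $S$ in order to later distinguish $\al$-type transformations, the role of $\sig$ must be handled by observing that $\sig$ lies in $H_1$ and one tracks how $H_1$ acts. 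Concretely, I would present: $\nu_1,\rho_1$ preserve $a_1a_n$; $\sig$ replaces the relevant product by $b_1b_n$; and then argue that along any word in $H$, the pair $(a_1a_n,b_1b_n)$ is permuted among itself by the $\sig$'s while each individual product is preserved by the $\nu_1,\rho_1$ (and the corresponding $\nu_2,\rho_2$ implicitly), so the \emph{set} $\{a_1a_n,b_1b_n\}$ is $H$-invariant. If both products are equal this gives $\vf(hS)=\vf(S)$ immediately; the subtlety when they differ is exactly what forces the careful bookkeeping, and I would expect the proof to either restrict attention to the situation where condition (i) of the canonical form has been partially imposed, or to simply note that $\sig$ should have been excluded or that $\vf$ is best viewed modulo this swap — in any case, the parity computation for $\al$ is the substantive content, and the $H$-part is a routine generator-by-generator check once the $\sig$ issue is correctly framed.
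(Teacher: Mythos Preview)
Your treatment of $\al$ and of the generators $\nu_j,\rho_j$ is fine, but you have a genuine gap at the one nontrivial point: the generator $\sig$. You assert that $\vf(\sig S)=b_1b_n$ ``need not literally equal $a_1a_n$'' and then spend a paragraph trying to reinterpret the lemma, restrict to canonical form, or track the unordered pair $\{a_1a_n,b_1b_n\}$. None of that is needed, and in fact your assertion is false: for any $S\in\{TT(n)\}$ one always has $a_1a_n=b_1b_n$.

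The missing idea is to use the defining autocorrelation identity. Taking $i=n-1$ in
\[
N_A(i)+N_B(i)+2N_C(i)+2N_D(i)=0,\qquad i\ne 0,
\]
and noting that $N_A(n-1)=a_1a_n$, $N_B(n-1)=b_1b_n$, $N_C(n-1)=c_1c_n$, while $N_D(n-1)=0$ (since $D$ has length $n-1$), one gets
\[
a_1a_n+b_1b_n+2c_1c_n=0.
\]
Each term is $\pm1$, so the only possibility is $a_1a_n=b_1b_n=-c_1c_n$. Hence $\vf(\sig S)=b_1b_n=a_1a_n=\vf(S)$, and the lemma holds exactly as stated. This is precisely what the paper does; the substantive content of the lemma is this use of the autocorrelation relation at the extreme shift, not the parity computation for $\al$.
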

\begin{proof}
The first assertion holds because $n$ is even. To prove the
second assertion, it suffices to verify that it holds when
$h$ is one of the generators $\nu_j,\rho_j$, $j=1,2,3,4$, or
$\sig$. This is obvious in the former case. It is also true
in the latter case $(h=\sig)$ because equation (\ref{KorNula}) 
with $i=n-1$ implies that $a_1 a_n=b_1 b_n$.
\end{proof}

The main tool that we use to enumerate the equivalence classes of
 $\{TT(n)\}$ is the following theorem.

\begin{theorem} \label{Glavna}
For each equivalence class $\pE\subseteq  \{TT(n)\}$	 there is a
unique $S=(A;B;C;D)\in\pE$ having the canonical form.
\end{theorem}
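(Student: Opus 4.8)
The plan is to prove existence via Proposition~\ref{Klase} (already established) and to concentrate all the work on uniqueness. Suppose $S=(A;B;C;D)$ and $T=(A'';B'';C'';D'')$ both lie in the equivalence class $\pE$ and both satisfy conditions (i)--(vi). By Proposition~\ref{Orbite} there is $g\in G$ with $gS=T$. The goal is to show $g$ acts trivially on $S$, i.e.\ $g$ fixes $S$. First I would decompose $g$ according to the structure of $G$ as a semidirect product: write $g=h$ or $g=\al h$ with $h\in H$, and further decompose $h$ using $H=H_1\times H_2$ together with the description of $H_1=\langle\nu_1,\rho_1,\sig\rangle$ and $H_2=\langle\nu_3,\nu_4,\rho_3,\rho_4\rangle$.

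The first reduction is to rule out the factor $\al$. Here Lemma~\ref{Lema} does the job: condition (i) forces $a_1a_n=+1$ for both $S$ and $T$, so $\vf(S)=\vf(T)=+1$; but if $g=\al h$ then $\vf(T)=\vf(\al h S)=-\vf(hS)=-\vf(S)$, a contradiction. Hence $g\in H$. The next step is to peel off, coordinate block by coordinate block, the remaining generators by confronting each with the relevant canonical-form condition:
\begin{enumerate}[(1)]
\item The $C$-block: the transformation applied to $C$ is one of $1,\nu_3,\rho_3,\nu_3\rho_3$ (i.e.\ $C\mapsto C,-C,C',-C'$). Condition (i) ($c_1=+1$, hence $c_n=-1$ by (\ref{KorNula})) kills $\nu_3$ and $\rho_3$ individually; condition (iv) then kills $\nu_3\rho_3=-C'$ since it would flip the sign of the distinguished entry $c_i$. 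So the $C$-component of $h$ is trivial.
\item The $D$-block: similarly the $D$-component is one of $1,\nu_4,\rho_4,\nu_4\rho_4$. Condition (i) ($d_1=+1$, and $d_{n-1}$ determined) eliminates the options that change $d_1$; the remaining ambiguity ($D\mapsto\pm D'$ when $D$ is symmetric) is resolved by condition (v), which pins down the sign via the least index with $d_id_{n-i}\ne d_{n-1}$.
\item The $\{A,B\}$-block: the $H_1$-component of $h$ is built from $\nu_1,\rho_1$ (acting on $A$), their conjugates $\nu_2,\rho_2$ (acting on $B$), and $\sig$ (swapping $A,B$). Condition (i) ($a_1=a_n=b_1=b_n=+1$) forces any $\nu_i$ or $\rho_i$ appearing to appear only in the sign-symmetric combination $\nu_i\rho_i$, i.e.\ as $A\mapsto\pm A$ with reversal, etc.; condition (ii) then forbids the reversal on $A$ and condition (iii) forbids it on $B$, while (vi) forbids the swap $\sig$ (it would exchange the roles of $a_2,b_2$ and of $a_{n-1},b_{n-1}$, violating the asymmetry imposed in the $a_2=b_2$ case, and in the $a_2\ne b_2$ case it is pinned by the sign of $a_2$).
\end{enumerate}

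Carrying this out shows every generator-component of $h$ is trivial, so $h=1$ and $S=T$. The main obstacle I expect is the $\{A,B\}$-block: the interaction of $\sig$ with $\nu_1,\rho_1,\nu_2,\rho_2$ is the only genuinely non-abelian part of the argument, and condition (vi) is the only canonical-form condition tying $A$ and $B$ together, so I would need to case-split on whether $a_2=b_2$ and track carefully how a hypothetical $\sig$ interacts with (ii), (iii) and (vi) simultaneously — in particular verifying that after using (i) to force sign/reversal pairing, the subgroup of $H_1$ that could still fix (i) is small enough that (ii), (iii), (vi) leave only the identity. A clean way to organize this is to first note that conditions (i)--(v) alone pin down $g$ up to the subgroup generated by $\sig$ (and possibly $\nu_1\rho_1\nu_2\rho_2$), and then invoke (vi) and equation (\ref{KorNula}) with $i=n-2$ (which forces exactly one of $a_2=b_2$, $a_{n-1}=b_{n-1}$) to eliminate that last piece.
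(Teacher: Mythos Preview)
Your plan is essentially the paper's proof: eliminate $\al$ via Lemma~\ref{Lema}, then peel off the $H_2$-part using (i),(iv),(v) on $C,D$, then the $H_1$-part using (i),(ii),(iii),(vi) on $A,B$.

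One concrete slip to fix in step~(3): because $a_1=a_n=b_1=b_n=+1$ (unlike $c_1=+1,\,c_n=-1$), condition~(i) does \emph{not} force the combination $\nu_i\rho_i$ on the $A,B$ block; rather it kills $\nu_1,\nu_2$ outright and leaves $h_1\in\langle\rho_1,\rho_2,\sig\rangle$. (The map $\nu_1\rho_1$ sends $a_1\mapsto -a_n=-1$, so it is excluded; plain $\rho_1$ sends $a_1\mapsto a_n=+1$, so it survives.) This is exactly how the paper states it.

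On organization: the paper disposes of $\sig$ \emph{first} via (vi), and only then kills $\rho_1,\rho_2$ via (ii),(iii). Your outline does the reverse, but as you yourself note in the final paragraph, with $\sig$ still present the first slot of $h_1S$ may carry $B^{(1)}$ or $(B^{(1)})'$ rather than $A^{(1)}$ or $(A^{(1)})'$, so condition~(ii) on $S$ and on $T$ are speaking about different sequences and do not directly pin down $u$. Your ``clean way'' at the end (handle $\sig$ last using (vi) together with \eqref{KorNula} at $i=n-2$) is closer, but note that (i)--(v) alone do \emph{not} reduce $h_1$ to $\langle\sig\rangle$; they reduce it to $\langle\rho_1,\rho_2,\sig\rangle$. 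The paper's order---(i) to reach $\langle\rho_1,\rho_2,\sig\rangle$, then (vi) to force $w=0$, then (ii),(iii) to force $u=v=0$---is the one that goes through without extra case analysis.
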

\begin{proof}
In view of Proposition \ref{Klase}, we just have to prove the
uniqueness assertion. Let
$$
S^{(k)}=(A^{(k)};B^{(k)};C^{(k)};D^{(k)})\in\pE,\quad (k=1,2)
$$
be in the canonical form. We have to prove that in fact
$S^{(1)}=S^{(2)}$.

By Proposition \ref{Orbite}, we have $gS^{(1)}=S^{(2)}$ for some
$g\in G$. We can write $g$ as $g=\al^t h$ where $t\in\{0,1\}$ and
$h\in H$. The symbols (i)-(vi) will refer to the corresponding
conditions of Definition \ref{KanFor}.

Since both $S^{(1)}$ and $S^{(2)}$ have the canonical form,
the condition (i) implies that $\vf(S^{(1)})=\vf(S^{(2)})=1$,
where $\vf$ is the function defined in Lemma \ref{Lema}.
Now this lemma implies that $t=0$, i.e., $g=h\in H$.

Recall that $H=H_1\times H_2$. Thus, $g=h=h_1h_2$ with
$h_1\in H_1$ and $h_2\in H_2$.
Consequently, $h_2 C^{(1)}=C^{(2)}$ and $h_2 D^{(1)}=D^{(2)}$. 
[What we really mean by these equations is that we have 
$$ h_2S^{(1)}=(A^{(1)};B^{(1)};C^{(2)};D^{(2)}). $$ 
Hopefully this simplified notation for the action of $H_2$, 
as well as its analog for the action of $H_1$, 
will not lead to any confusion.]
We can write $h_2=\nu_3^p \rho_3^q \nu_4^r \rho_4^s$ for some
$p,q,r,s\in\{0,1\}$. Then we have
 $\nu_3^p \rho_3^q  C^{(1)}=C^{(2)}$ and
$\nu_4^r \rho_4^s D^{(1)}=D^{(2)}$.
We shall now prove that $C^{(1)}=C^{(2)}$ and $D^{(1)}=D^{(2)}$.

Since $c^{(1)}_1=c^{(2)}_1=1$ and $c^{(1)}_n=c^{(2)}_n=-1$,
we conclude that $p=q$. Now the condition (iv) implies that
either $p=q=0$ or $\nu_3\rho_3 C^{(1)}=C^{(1)}$. In both
cases we have $C^{(1)}=C^{(2)}$.

Since $d^{(1)}_1=d^{(2)}_1=1$, the equality
$\nu_4^r \rho_4^s D^{(1)}=D^{(2)}$ implies that
$d^{(1)}_1d^{(1)}_{n-1}=d^{(2)}_1d^{(2)}_{n-1}$. Hence 
$d^{(1)}_{n-1}=d^{(2)}_{n-1}=\ve$.  If $\ve=+1$ we must have
$r=0$ and the condition (v) shows that either $s=0$ or
$\rho_4 D^{(1)}=D^{(1)}$. In both cases, $D^{(1)}=D^{(2)}$.
By a similar argument as in the previous paragraph but using 
the condition (v) instead of (iv), we
can show that this equality also holds when $\ve=-1$.

It remains to prove that $A^{(1)}=A^{(2)}$ and $B^{(1)}=B^{(2)}$.
Since $a_1^{(1)}=a_{n}^{(1)}=a_1^{(2)}=a_{n}^{(2)}=+1$, we must
have $h_1\in\langle \rho_1,\rho_2,\sig \rangle$, i.e.,
$h_1=\rho_1^u \rho_2^v \sig^w$ for some $u,v,w\in\{0,1\}$.
We claim that we can assume, without any loss of generality,
that $w=0$. This is clear if $A^{(1)}=B^{(1)}$. Otherwise, we 
have $n>2$ and the condition (vi) implies that either 
$a^{(1)}_{n-1}=+1$, $b^{(1)}_{n-1}=-1$ 
and $a^{(1)}_{n-1}=b^{(1)}_{n-1}$ or $a^{(1)}_{n-1}=+1$, 
$b^{(1)}_{n-1}=-1$ and $a^{(1)}_1=b^{(1)}_1$. 
It is now easy to see that we must have $w=0$. 
This proves our claim, and so we may assume that 
$h_1=\rho_1^u \rho_2^v$.
Consequently, we have $\rho_1^u A^{(1)}=A^{(2)}$ and
$\rho_2^v B^{(1)}=B^{(2)}$. The condition (ii) implies
that either $u=0$ or $\rho_1 A^{(1)}=A^{(1)}$. In both cases
we have $A^{(1)}=A^{(2)}$. The proof of $B^{(1)}=B^{(2)}$
is similar, using (iii) instead of (ii).
\end{proof}

\section{Representatives of the equivalence classes} \label{Tablice}

We have computed a set of representatives for the equivalence
classes of Turyn-type sequences for even $n\le32$. Due to
their excessive size, we tabulate whole sets for only $n\le12$.
Each representative is given in the canonical form,
which is made compact by using the following standard encoding scheme for Turyn-type sequences.

Let $S=(A;B;C;D)\in  \{TT(n)\}$.
For each index $i=1,2,\ldots,n-1$ the number
$4(1-a_i)+2(1-b_i)+(1-c_i)+(1-d_i)/2$
is an integer in the range $0,1,\ldots,15$.
We shall replace this integer by the
corresponding hexadecimal digit $h_i\in\{0,1,\ldots,9,a,b,c,d,e,f\}$.
We encode $S$ by the sequence $h_1,h_2,\ldots,h_n$ of $n$
hexadecimal digits. The hexadecimal digit $h_n$ represents 
the number $2(1-a_n)+(1-b_n)+(1-c_n)/2\in\{0,1,\ldots,7\}$.

Equivalently,  if we apply the substitution $+1\to 0$, $-1\to 1$
to the sequence $a_i,b_i,c_i,d_i$ for $i<n$, and the
sequence $a_n,b_n,c_n$ for $i=n$,
then we obtain the binary representation of the hexadecimal 
digit $h_i$. 
Clearly, the encoding sequence $h_1,h_2,\ldots,h_n$ of $S$ 
determines $S$ uniquely.

As an example, the Turyn-type sequence
{\small
\begin{eqnarray*}
A &=& \Zp\Zp\Zm\Zp\Zm\Zp\Zm\Zp; \\
B &=& \Zp\Zm\Zm\Zm\Zm\Zm\Zm\Zp; \\
C &=& \Zp\Zm\Zm\Zp\Zp\Zp\Zp\Zm; \\
D &=& \Zp\Zp\Zp\Zm\Zp\Zp\Zm
\end{eqnarray*}
}
is encoded as $06e5c4d1$. Note that when displaying a binary sequence, we shall often
write $+$ for $+1$ and $-$ for $-1$.

For each $n$, the representatives are listed in the lexicographic
order of the symbol sequences $h_1,h_2,\ldots,h_n$. Since all
representatives have the canonical form, we always have $h_1=0$
and $h_n=1$. In tables 2-3, the last hexadecimal digit $h_n=1$ is omitted. However, the first hexadecimal digit $h_1=0$ will always be recorded.

For $n\le10$ we list all representatives in Table 2. For
$12 \le n \le 32$, we list in each case only the first dozen 
representatives. For $n \le 22$, the list of representatives
was computed independently by two different programs,
but for the range $24 \le n \le 32$, only the more optimized program
was used. We discuss the search method in the next section.

\begin{center}
\begin{tabular}{c|c|c|c|c|c|c|c|c}
\multicolumn {9}{c}{Table 1: The number of equivalence classes in $\{TT(n)\}$} \\ \hline \hline

 n            & 2 & 4 & 6 & 8 & 10 & 12  & 14  & 16 \\
$|\{TT(n)\}|$ & 1 & 1 & 4 & 6 & 43 & 127 & 186 & 739  \\
\hline
n             & 18  & 20  & 22   & 24   & 26   & 28   & 30   & 32 \\
$|\{TT(n)\}|$ & 675 & 913 & 3105 & 3523 & 3753 & 4161 & 4500 & 6226 \\
\end{tabular}
\end{center}

\ttfamily

\begin{center}

\begin{tabular}{rlrlrlrl}
\multicolumn{8}{c}{Table 2: Class representatives for
$n=2,4,6,8,10$} \\ \hline

\multicolumn{8}{c}{$n=2$} \\
1 & 0  & & \\ \hline
\multicolumn{8}{c}{$n=4$} \\
1 & 016  & & \\ \hline
\multicolumn{8}{c}{$n=6$} \\
1 & 006d6  & 2 & 01396  & 3 & 045ec  & 4 & 0608d  \\ \hline
\multicolumn{8}{c}{$n=8$} \\
1 & 001c6a5  & 2 & 0049e25  & 3 & 005e5c6  & 4 & 00c1786  \\
5 & 06e054d  & 6 & 06e5c4d  & \\  \hline
\multicolumn{8}{c}{$n=10$} \\

1 & 0001f4a96  & 2 & 00036c796  & 3 & 0006f8365  & 4 & 000ef86a5  \\
5 & 00134e696  & 6 & 001ce8965  & 7 & 0047e4f16  & 8 & 0049a13c6  \\
9 & 0057c6e16  & 10 & 0076f4ee5  & 11 & 007809cd6  &
12 & 007b393e5 \\
13 & 007cc94d6  & 14 & 007cca8e5  & 15 & 00870bec6
  & 16 & 008f4dac6  \\
17 & 00b6fa2e5  & 18 & 00c5c7e85  & 19 & 00e063895
  & 20 & 00f6e8ea5  \\
21 & 012408f96  & 22 & 01402b8e5  & 23 & 014308ae5
  & 24 & 0401368bc  \\
25 & 044a18fec  & 26 & 04932a63c  & 27 & 05176df5c
  & 28 & 052bb137c  \\
29 & 05716d9dc  & 30 & 0588caf1c  & 31 & 05a82aedc
  & 32 & 05b7b13dc  \\
33 & 05bf1b5dc  & 34 & 05fb71f5c  & 35 & 061137b4d
  & 36 & 06113b58d  \\
37 & 0614aec8d  & 38 & 061ae6e8d  & 39 & 061b3738d
  & 40 & 061d7f54d  \\
41 & 06a1058cd  & 42 & 06bcd84cd  & 43 & 074625ccd  &  \\ \hline

\end{tabular}
\\
\end{center}

\begin{center}

\begin{tabular}{rlrl}
\multicolumn{4}{c}{Table 3: First twelve class representatives}\\
\multicolumn{4}{c}{for $n=12,14,16,18,20,22,24,26,28,30,32$}
\\ \hline

\multicolumn{4}{c}{$n=12$} \\
1 & 0004f90bc96  & 2 & 0006b8c1da5 \\
3 & 0007c918e96  & 4 & 0008bd43c96 \\
5 & 0009e0a7c95  & 6 & 000b0f68d66 \\
7 & 000b8d50e96  & 8 & 000d26db4a6 \\
9 & 000d2e974a6  & 10 & 000d2e978a6 \\
11 & 000e471ea96 & 12 & 000f0736695 \\

\multicolumn{4}{c}{$n=14$} \\
1 & 00036ac71c765  & 2 & 00041f906bca5 \\
3 & 000497813eca5  & 4 & 0006698fc23a5  \\
5 &0007b2af4e3a5   & 6 & 0007b2b343e95 \\
7 &0008e783d62a5 & 8 & 000a07d41ad96\\
9 &  000af2175a396 & 10 & 000b31c7563a5\\
 11 &000b6283acd65& 12 & 000b679e32ea5\\

\multicolumn{4}{c}{$n=16$} \\
1 &0000778e52de556 & 2 &00007e4b0e53956 \\
3 &0000f0d734a5966 & 4 & 0000f5461f2a965\\
5 &0000fdc397459a5 & 6 &0000fdc397499a5 \\
7 & 00018f07d45ea95& 8 &0001c39c6e95965 \\
9 &00023e1c6748795 & 10 &00023e1c6b48755 \\
11 &00049b15e4d3ca5 & 12 & 0004fe172b471a6\\

\multicolumn{4}{c}{$n=18$} \\
1  & 00006758b30d1e9a5 & 2  & 0000b7c117952e9a5 \\
3  & 0000f87341bd29956 & 4  & 000149f0b259ee595 \\
5  & 00017c2183a68f655 & 6  & 0001897a4c3df0596  \\
7  & 0001b465432e0fa95 & 8  & 0001cb44731d2a9a5 \\
9  & 00030e9bb21da8b65 & 10 & 000363645f0e52b95  \\
11 & 000366969e231c755 & 12 & 0004b350d6918f1a6 \\

\multicolumn{4}{c}{$n=20$} \\
1  & 000038e2739c7a0b695 & 2  & 00004ef0b7c0b6bc5a6  \\
3  & 00006bcab161e913a65 & 4  & 000077078d6f2433a95  \\
5  & 0000bb0754e3e523695 & 6  & 0000bf40b3a3938d696  \\
7  & 0000cb30fe68a5f86a5 & 8  & 0000e8af34cb43e95a6  \\
9  & 0000f04b72a1f196a65 & 10 & 0000f0b27acca39a695  \\
11 & 0000f0f216c9ba59aa6 & 12 & 0000f0f4ce7a15aa966\\

\multicolumn{4}{c}{$n=22$} \\
1  & 00000f702c71a9ad56596 & 2  & 00000f7a12bd68e36a596  \\
3  & 00000f8b358d263c5aa56 & 4  & 00000fb60539ea1ea69a5  \\
5  & 000032f0f792e9665b966 & 6  & 000032f87f835e2a57966  \\
7  & 00003609cf34a6d81b9a5 & 8  & 0000376a43258e2dcb965  \\
9  & 00003fa2c8bc24bd47a56 & 10 & 00004f1acf9149e8b2596  \\
11 & 0000538b4dea91227c696 & 12 & 000058782f506bd31c966\\
\end{tabular}

\begin{tabular}{rcrc}
\multicolumn{4}{c}{$n=24$} \\
1  & 00000b7c2cb2bc4b6cd9a96 & 2  &  00000dfc0c3f86787589a56 \\
3  & 00000dfc0c3f8a747985a96 & 4  &  00000f9e90729c9f4ca55a6 \\
5  & 00000fb24bcf48d26e55a56 & 6  &  00002671f06b3c7a41d7a96 \\
7  & 00003ba5d1f0b55ac1c7956 & 8  &  000044bb0787c2d92ed1596 \\
9  & 00004996a5f086ef03dc965 & 10 &  00004b67a135ca713cf2a56 \\
11 & 00004f2b6038d5ac19bc695 & 12 &  00004fe0fdc0a7a498b1695\\

\multicolumn{4}{c}{$n=26$} \\
1  & 000000ff0f846f1ca5a5aa955 & 2  & 00000b70c5f25257c69c39966  \\
3  & 00000b70cb54b0f1ea6239965 & 4  & 00000bab68f0da58e311d6a95  \\
5  & 00000c7e12e4391b865f8a596 & 6  & 00000f8f50cb26da9e51a9996  \\
7  & 00001477c0bed592960f39a55 & 8  & 000014bcf58a5f11269f05966  \\
9  & 0000178b0f2d9285badc19a66 & 10 & 000017ac6234e90b6d7d25966  \\
11 & 00002372d8f4a1ead7827b966 & 12 & 000027696c2491f88d3e0ba55\\

\multicolumn{4}{c}{$n=28$} \\
1  & 0000067cde3e50639ab46135aa5 & 2  & 000007f4038fa4d1529b16da656  \\
3  & 00000ab877e0a8fd862df0396a5 & 4  & 00000b344e59ca17f29216e5695  \\
5  & 00000df479ad14dab0c1f986a56 & 6  & 0000137872534b30ae5c2f69996  \\
7  & 000013847ef03e69586e2e96596 & 8  & 000015c86f122d54bb8fc4da5a5  \\
9  & 0000190ffe11a35f8695b709a96 & 10 & 00002799e66d6c8ebc25cf07aa5  \\
11 & 00003065e3788a2e1d693e4b556 & 12 & 00003a6b92877521ef412d1b956\\

\multicolumn{4}{c}{$n=30$} \\
1  & 000000f70b106f9d427a25e9a9695 & 2  & 000003f0ed871781d5d2a65876956  \\
3  & 000003f403872d2ba6cd5b1876a96 & 4  & 0000065f298b853ac3c2d86e39566  \\
5  & 000007e6883ca99f22570f0ae55a5 & 6  & 000007f701bd8f28b1a2583ae9a56  \\
7  & 00000bf4a07ab28c7dcd63e8da696 & 8  & 00000e3a785942359c33e0f669aa5  \\
9  & 00000f0f1c3a662b3dc6a59669aa5 & 10 & 00000fb507b6a1c5b03ec70e69aa6  \\
11 & 00000fe87624da3ac70bdeda59a66 & 12 & 00000ff118f947513c26d8a565a56 \\

\multicolumn{4}{c}{$n=32$} \\
1  & 00000138f64f1c1e77844f26d95a596 & 2  &0000067c7a5e84b6c1deb0cd71eaa65  \\
3  & 000006d074e9e0fb056835f289d55a6 & 4  &00000718f80fcfd24abb8925c9e6a95  \\
5  & 0000077403f8b0791e4ed89713e9565 & 6  &000007f30b587fc61bbe123969355a6  \\
7  & 0000093c5353ce49d36a4f50b516a96 & 8  &000009f4306ad6f086f92cb7d8c96a6  \\
9  & 00000b34d13a7d09c960d6790ada566 & 10 &00000d3dc8b2c4afaf078dd8678a596  \\
11 & 00000e6780dc4bb702f1b441fc96965 & 12 &00000eb38c5f53827c9e70716156995 \\

\end{tabular}
\\
\end{center}

\rmfamily

The complete list of Tuyrn-type sequences for $n \leq 32$ is available electronically at http://www.cs.uleth.ca/TurynType/.

In addition to the canonical $TT(n)$, the maximum number of initial 
zeroes in our canonical form is also of interest (Note that a zero in the canonical form represents a column of $1$s in the Turyn-type sequences). If a method for predicting the number of initial zeroes could be brought to light, it could greatly decrease future computation for individual Turyn-type sequences, since the first portion of each sequence would be pre-computed. Note that the first entry in each table listed above for $n \le 32$ represents the maximal number of initial zeroes for their respective lengths. There seems to be little correlation between $n$ and the maximum number of initial zeros.

By setting $x=1$ in (\ref{norm-TT}), we see that $6n-2$ is necessarily a sum of six (integer) squares as follows:
$$ A(1)^2+B(1)^2+2C(1)^2+2D(1)^2=6n-2. $$
It is noteworthy that our computation shows that for all even $n\le 32$,  any choice of 
four squares $A(1)^2$, $B(1)^2$, $C(1)^2$, $D(1)^2$ satisfying 
this equation can be realized by some $TT(n)$. 

For the sake of completeness, let us mention that $TT(n)$
for $n=26,28,30,32,34$ were constructed in \cite{KS},
and for $n=36$ in \cite{KT}. When transformed to the canonical form
(and encoded) these six sequences are:

\begin{eqnarray*}
&& 0560110f0f9ec89d54a6867dc \\
&& 0005189b4d2e583e5571efc9196 \\
&& 00788193c52741c99e060a73a22d5 \\
&& 005088b3dc4d69db0a13438a6c2e916 \\
&& 052351540cf016cfbe5809958b32825bc \\
&& 000f0f51c9bbd750cb048e3902185ca6a96
\end{eqnarray*}

\noindent The first four of them indeed occur in our complete listings
of class representatives for $n=26,28,30,32$.

\section{The Computational Method}\label{tt(38)}

As we have observed, there is compelling computational evidence that  $TT(n)$ exist for all even $n$. While our computational findings positively confirm the existence of $TT(n)$, they also show the difficulty in finding these sequences for large $n$.

In this section, we describe our method of finding a $TT(38)$ and 
set the stage for more computational work in trying to find $TT(n)$ 
for $n\ge 40$.

In order to search for $TT(38)$, we modified the search method in \cite{KT}. For the sake of completeness, we will briefly describe our  modified search method below.
\\ \\
{\bf The search method:}

We first find and retain
all partial sequences
\begin{center}
$$
\begin{array}{rcccl}
A^*&=&(a_1,\ldots,a_6,a_7,*,\ldots,*,a_{32},\ldots,a_{38}); \\
B^*&=&(b_1,\ldots,b_6,b_7,*,\ldots,*,b_{32},\ldots,b_{38}); \\
C^*&=&(c_1,\ldots,c_6,c_7,*,\ldots,*,c_{32},\ldots,c_{38});\\
D^*&=&(d_1,\ldots,d_6,*,*,\ldots,*,d_{32},\ldots,d_{37})
\end{array}
$$
\end{center}
for which
$$(N_{A^*}+N_{B^*}+2N_{C^*}+2N_{D^*})(s)=0 \ \ \textrm{for}\ \
s\ge 31,$$ 
and which have the canonical form detailed in Definition \ref{KanFor}.  In order to maintain a feasible number of cases, we precomputed $14,14,14,12$ entries in $A,B,C,D$ respectively. 
There are $23472940$ solutions in total. 
The set $\mathcal{S}$ of all of these solutions is input 
for the (modified) algorithm described in \cite[Section 3]{KT}.

To begin, select $a$, $b$, $c$, $d$ such that
$$a^2+b^2+2c^2+2d^2=226.$$

\noindent Generate all sequences $C$ with the sum of entries equal
to $c$ and for which $f_C(\theta)=N_C(0)+2\sum_{j=1}^{n-1}N_C(j)\cos j\theta \le 113$ for all $\theta \in
\{\frac{j\pi}{600}\ | \ j=1,2,\ldots,600\}$ and save
proper sequences according to their identical first and last seven
entries. We do the same for the sequences $D$ with the sum $d$.

The rest of the procedure is similar to the algorithm in \cite[Section 3]{KT}. Choose a solution $\{A^*,B^*,C^*,D^*\}$ in $\mathcal{S}$.
Let $\mathcal{C}$ and $\mathcal{D}$ be the sets of those sequences 
$C$ and $D$ whose first and last seven entries are identical
to the first and last seven entries of $C^*$ and $D^*$,
respectively. For any $C\in \mathcal{C}$ and $D\in \mathcal{D}$
for which $f_C(\theta)+f_D(\theta)\le 113$ for all $\theta \in
\{\frac{j\pi}{600}\ | \ j=1,2,\ldots,600\}$, we proceed to fill in
 partial sequences $A^*$ and $B^*$ step by step (see \cite[Section 3, Step 3]{KT} for further details) until we find appropriate sequences $A,B$. If we do not 
find such sequences, then we start again from the beginning.

Our search
with $a=8$, $b=-4$, $c=8$ and $d=-3$ resulted in the following
solution:

\noindent
$A=~$\Zp\Zp\Zp\Zp\Zm\Zm\Zp\Zp\Zp\Zp\Zp\Zm\Zp\Zp\Zp\Zm\Zm\Zm\Zp\Zm\Zp\Zp\Zm\Zp\Zp\Zp\Zp\Zp\Zm\Zp\Zp\Zm\Zm\Zm\Zm\Zm\Zm\Zp \\
$B=~$\Zp\Zm\Zp\Zp\Zp\Zm\Zm\Zm\Zm\Zp\Zp\Zm\Zp\Zm\Zp\Zp\Zm\Zm\Zm\Zm\Zm\Zm\Zm\Zm\Zp\Zm\Zm\Zm\Zp\Zp\Zp\Zm\Zp\Zm\Zp\Zp\Zm\Zp \\
$C=~$\Zp\Zp\Zp\Zm\Zp\Zm\Zp\Zp\Zp\Zp\Zp\Zm\Zp\Zp\Zp\Zm\Zp\Zm\Zm\Zm\Zm\Zp\Zp\Zp\Zm\Zp\Zm\Zm\Zp\Zm\Zm\Zp\Zp\Zp\Zm\Zp\Zp\Zm \\
$D=~$\Zp\Zm\Zm\Zp\Zp\Zm\Zm\Zm\Zp\Zp\Zm\Zm\Zp\Zp\Zm\Zp\Zm\Zm\Zm\Zm\Zp\Zm\Zp\Zm\Zm\Zm\Zp\Zm\Zp\Zp\Zp\Zp\Zm\Zp\Zm\Zm\Zp

In encoded form: $05128f55401f041adf7f65c53567822c9cb9c$.

The same algorithm was used to find all representatives of canonical forms. To give an indication
of computation time, all $TT(20)$ in canonical form took under five CPU minutes, whereas, all canonical $TT(32)$ took
approximately 50000 CPU hours on a single computer.

For the sake of completeness we recall the following known facts.
{\em Base sequences} $(A;B;C;D)$ are quadruples of $\{\pm 1\}$-sequences,
with $A$ and $B$ of length $m$ and $C$ and $D$ of length $n$,
and such that
\begin{equation} \label{norm}
N(A)+N(B)+N(C)+N(D)=2(m+n).
\end{equation}
See  \cite{DZ1, DZ2, DZ3} for details and the classification of these sequences.

Four $\{0,\pm 1\}$-sequences $A$, $B$, $C$, $D$ of length $n$ are
called $T$-{\it sequences} if
$$(N_A+N_B+N_C+N_D)(s)=0, \ \ \textrm{for}\ \  s\ge 1,$$
and in each position, exactly one of the entries of $A$, $B$, $C$,
$D$ is nonzero.

If $(A;B;C;D)$ are $TT(n)$, then $(C,D;C,-D;A;B)$ 
are base sequences of lengths $2n-1$,$2n-1$,$n$,$n$, respectively. 
(Here we use comma as the concatenation operator.)
Hence, the existence of $TT(38)$ implies the existence of base 
sequences of lengths $75$, $75$, $38$, $38$. 

If $(A;B;C;D)$ are base sequences of lengths $m$,$m$,$n$,$n$ 
respectively, then 
$$ ((A+B)/2,0_n;(A-B)/2,0_n;0_m,(C+D)/2;0_m,(C-D)/2) $$
are $T$-sequences of length $m+n$. 
(Here, the addition and subtraction of two sequences is 
component-wise, and $0_m$ denotes the sequence of $m$ zeroes.)
Consequently, $T$-sequences of length $75+38=113$ exist and 
we have the following corollary.

\begin{corollary}
There are base sequences of lengths $75$, $75$, $38$, $38$ and
therefore $T$-sequences of length $113$.
\end{corollary}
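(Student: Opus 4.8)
The plan is to observe that this corollary is an immediate consequence of the constructions recalled in the preceding paragraphs together with the explicit $TT(38)$ exhibited earlier in Section~\ref{tt(38)}. First I would invoke the existence of the $TT(38)$ given by the quadruple $(A;B;C;D)$ found by the search (the one encoded as $05128f55401f041adf7f65c53567822c9cb9c$). By definition, this quadruple satisfies the norm identity $N(A)+N(B)+2N(C)+2N(D)=6\cdot 38-2$, with $A,B,C$ of length $38$ and $D$ of length $37$.

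Next I would apply the passage from $TT(n)$ to base sequences stated in the text: if $(A;B;C;D)$ is a $TT(n)$, then $(C,D;C,-D;A;B)$ is a quadruple of base sequences of lengths $2n-1$, $2n-1$, $n$, $n$. The only thing to check here is the norm identity \eqref{norm}, namely $N(C,D)+N(C,-D)+N(A)+N(B)=2\big((2n-1)+n\big)$. This follows from the elementary fact that for the concatenation $X,Y$ one has $N(X,Y)+N(X,-Y)=2N(X)+2N(Y)$ (the cross terms between $X$ and $\pm Y$ cancel), so $N(C,D)+N(C,-D)=2N(C)+2N(D)$, and then the right-hand side is $2N(C)+2N(D)+N(A)+N(B)=6n-2=2(3n-1)=2\big((2n-1)+n\big)$, as required. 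Specializing to $n=38$ gives base sequences of lengths $75$, $75$, $38$, $38$.

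Then I would apply the second recalled construction: from base sequences of lengths $m$, $m$, $n$, $n$ one obtains $T$-sequences of length $m+n$ via the explicit $\{0,\pm1\}$-quadruple $\big((A+B)/2,0_n;(A-B)/2,0_n;0_m,(C+D)/2;0_m,(C-D)/2\big)$. One verifies that exactly one of the four entries is nonzero in each position (since $A_j, B_j\in\{\pm1\}$ forces exactly one of $(A_j+B_j)/2$, $(A_j-B_j)/2$ to be nonzero, and similarly for $C,D$), and that the autocorrelation identity $(N_A+N_B+N_C+N_D)(s)=0$ for $s\ge1$ follows from \eqref{norm} after expanding the norms. Taking $m=75$, $n=38$ yields $T$-sequences of length $113$, which completes the proof. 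The only potentially subtle point — hence the one I would state carefully — is the cancellation of cross terms in $N(X,Y)+N(X,-Y)=2N(X)+2N(Y)$ and the corresponding expansion for the $T$-sequence norms; everything else is bookkeeping, and both reductions are already asserted verbatim in the text preceding the corollary.
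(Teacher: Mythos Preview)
Your proposal is correct and follows exactly the route the paper takes: the corollary is simply the specialization to $n=38$ of the two constructions (from $TT(n)$ to base sequences, and from base sequences to $T$-sequences) stated verbatim in the paragraphs immediately preceding it, together with the explicit $TT(38)$ found by the search. The paper gives no further proof for the corollary; your added verifications of the norm identities and the disjoint-support property are more detail than the paper itself supplies, but they are accurate and in the same spirit.
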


The existence of $T$-sequences of length $113$ implies the existence of an infinite number of Hadamard matrices; see \cite[Section 1]{KT} for details.

\section{Acknowledgments}

D.\v{Z}. \DJo{} and H. Kharaghani are grateful to NSERC for the continuing support of their research. Part of this work was made possible by the facilities
of the Shared Hierarchical Academic Research Computing Network
(SHARCNET:www.sharcnet.ca), WestGrid and Compute/Calcul Canada. The authors would like to thank an anonymous referee for a detailed list of suggestions which improved the composition and the presentation of the paper considerably.

\end{document}